\documentclass[11pt]{amsart}
\usepackage{amssymb,amsmath}
\usepackage[sc]{mathpazo}

\usepackage[colorlinks=true]{hyperref}
\hypersetup{urlcolor=blue, citecolor=blue, linkcolor=blue}

\usepackage{xcolor}
\usepackage{enumitem}

\theoremstyle{plain}
\newtheorem{mainthm}{Theorem}
\newtheorem{mainclly}{Corollary}
\newtheorem{theorem}{Theorem}[section]
\newtheorem{thm}[theorem]{Theorem}

\newtheorem{proposition}[theorem]{Proposition}

\newtheorem{lemma}[theorem]{Lemma}

\theoremstyle{definition}
\newtheorem{remark}[theorem]{Remark}
\newtheorem{example}[theorem]{Example}
\newtheorem{definition}[theorem]{Definition}

\newcommand{\F}{\mathcal F}

\newcommand{\field}[1]{\mathbb{#1}}
\newcommand{\real}{\field{R}}

\newcommand{\integer}{\field{Z}}

\renewcommand{\epsilon}{\varepsilon}
\newcommand{\eps}{\varepsilon}

\title[Periodic points for expansive pseudo-groups]{On the number of periodic points for expansive pseudo-groups}

\author{Pablo D. Carrasco}
\address{ICEx-UFMG, Avda. Presidente Antonio Carlos 6627, Belo Horizonte-MG, BR31270-90; Department of Mathematics, Southern University of Science and Technology, No 1088, Xueyuan Rd., Xili, Nanshan District, Shenzhen, Guangdong 518055, China.}
\email{pdcarrasco@gmail.com}
\author{Elias Rego}
\address{Department of Mathematics, Southern University of Science and Technology, No 1088, Xueyuan Rd., Xili, Nanshan District, Shenzhen, Guangdong 518055, China.}
\email{rego@sustech.edu.cn}
\author{Jana Rodriguez-Hertz}
\address{SUSTech International Center for Mathematics and Department of Mathematics, Southern University of Science and Technology, No 1088, Xueyuan Rd., Xili, Nanshan District, Shenzhen, Guangdong 518055, China.}
\email{rhertz@sustech.edu.cn}

\thanks{PDC was partially supported by CAPES-PRINT 88887.716916/2022-00
ER was partially supported by NSFC 12250710130
JRH was partially supported by NSFC 12161141002 and 12250710130}

\subjclass[2010]{Primary 37C85, 57R30; Secondary 37B05.}

\keywords{Expansive Foliations, Epstein filtration}

\begin{document}

	\begin{abstract}

	In this work we consider foliations of compact manifolds whose holonomy pseudo-group is expansive, and analyze their number of compact leaves. Our main result is that in the codimension-one case this number is at most finite, and we give examples of such foliations having  one compact leaf.
	
\end{abstract}

\maketitle

	\section{Introduction}
Let $M$ be a compact manifold and $\F$ a foliation on it. In this work we will be concerned with understanding possible restrictions for the number of compact leaves that $\F$ may have, provided that its holonomy pseudo-group has some rich dynamics. Concretely, we will consider $\F$ having an expansive pseudo-group: in this case $\F$ is said to be an \emph{expansive foliation}.

This type of question is of course very natural, and has its roots in the analogous problem for group actions. For the classical case when one has a (say, smooth) action of $\real$ or $\integer$ on $M$ the answer is known: any such action has countably many closed orbits (periodic points). We note here that in the rank-one case there are no expansive actions of codimension-one.

It turns out that there exist codimension-one expansive foliations. The contribution of this work to the theory consists precisely in answering the aforementioned question in this setting.

\begin{mainthm}\label{finitecompactleaves}
  Let $\F$ be a $\mathcal{C}^{1,0+}$ expansive codimension-one foliation of a compact boundaryless manifold. Then it has at most finitely many closed leaves.
  \end{mainthm}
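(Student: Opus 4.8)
The plan is to pass to the holonomy pseudo-group and show that the set of points of a complete transversal lying on a compact leaf has no accumulation point. Fix a compact complete transversal $T$ (a compact $1$-manifold) for $\F$, and let $\mathcal H$ be its holonomy pseudo-group; since $M$ is compact $\mathcal H$ is finitely generated, and it is expansive with some constant $c>0$. A leaf is compact if and only if it meets $T$ in a finite set, distinct compact leaves meet $T$ in disjoint sets, so it suffices to prove that $\mathcal P:=\{t\in T:\text{the leaf through }t\text{ is compact}\}$ is finite, i.e. has no accumulation point in the compact set $T$. Throughout I will use one elementary principle: if an open $\mathcal H$-saturated subinterval $J\subseteq T$ is acted on by the germs of $\mathcal H$ through a group of size $\le 2$ (for instance through the trivial group, or through $\{\,\mathrm{id},\ t\mapsto -t\,\}$), then picking $t\neq t'$ in $J$ with $d(t,t')<c/2$ shows that no element of $\mathcal H$ can $c$-separate them, contradicting expansivity; hence no such flat region exists.

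First I would rule out finite holonomy for compact leaves. If a compact leaf $L$ had finite holonomy group $G$, Reeb local stability (valid in class $\mathcal{C}^{1,0+}$) provides an $\F$-saturated neighborhood $N\cong\widetilde L\times_{G}D$ of $L$, with $D\subset T$ a transverse interval; since $G$ is a finite subgroup of $\mathrm{Homeo}(\real,0)$ it is trivial or generated by $t\mapsto -t$, so the germs of $\mathcal H$ along leaves in $N$ act on $D$ through a set of size $\le 2$, and the principle above contradicts expansivity. Hence \emph{every compact leaf has infinite holonomy group}. (In particular a single isolated compact leaf is not excluded, consistently with the examples announced in the introduction; the entire difficulty is to control accumulation.)

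Next I would show $\mathcal P$ has no accumulation point. Suppose otherwise: distinct compact leaves $L_n$ accumulate on a leaf $L_\infty$, and $\Lambda:=\overline{\bigcup_n L_n}$ is a closed $\F$-saturated set. On $\Lambda$ I would build an Epstein-type filtration that records the points near which compact leaves accumulate with \emph{locally unbounded volume}: let $B=B_0$ be this ``bad set'', and iterate transfinitely, $B_0\supseteq B_1\supseteq\cdots$, intersecting at limit ordinals; by the Epstein argument this stabilizes at a closed $\F$-saturated $B_\infty$. Off the bad set, an accumulation of compact leaves of locally bounded volume forces the limit leaf to be compact with finite holonomy (a generalized Reeb stability, again using the $\mathcal{C}^{1,0+}$ hypothesis), contradicting the previous paragraph; so all the accumulation concentrates in $B_\infty$, which is therefore nonempty and contains infinitely many of the $L_n$. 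Finally I would contradict expansivity on $B_\infty$: pick $t_\infty\in B_\infty$ and points $t_n\in L_n\cap T$ with $t_n\to t_\infty$; since each $L_n$ is compact, holonomy transport once around $L_n$ is an element $g_n\in\mathcal H$ fixing $t_n$, and — exploiting that near $B_\infty$ the relevant dynamics of $\mathcal H$ is generated by such returns, together with the $\mathcal{C}^{1,0+}$ control — one shows the $g_n$ converge uniformly to the identity on a fixed interval $I\ni t_\infty$. Then no composition of generators of $\mathcal H$ can $c$-separate a suitably chosen pair of points of $I$, contradicting expansivity. Hence $\mathcal P$ has no accumulation point, is finite, and $\F$ has finitely many compact leaves.

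The main obstacle is the last step: converting the terminal Epstein stratum $B_\infty$ — a closed saturated set carrying compact leaves of unbounded complexity — into a violation of expansivity. The two delicate points are (i) showing that the return maps $g_n$ of the accumulating compact leaves tend to the identity on a \emph{uniform} transverse interval, which is exactly where $\mathcal{C}^{1,0+}$ regularity (rather than mere $\mathcal{C}^{0}$) is used, and (ii) arranging the Epstein-type filtration so that its terminal stratum is nonempty precisely when $\mathcal P$ has an accumulation point, so that its emptiness closes the argument.
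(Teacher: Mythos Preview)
Your overall architecture (Epstein-type filtration + Reeb stability + contradiction with expansivity) is the same as the paper's, and your first step (compact leaves have infinite holonomy) is correct and noted in the paper as well. But the endgame has a genuine gap, precisely at the point you yourself flag as ``the main obstacle.''

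The claim that the return holonomies $g_n$ converge uniformly to the identity on a fixed transverse interval is not justified, and there is no evident mechanism for it. On your terminal stratum $B_\infty$ the accumulating compact leaves have, by construction, locally \emph{unbounded} volume, so the loops generating $g_n$ are arbitrarily long compositions of the generators of $\mathcal H$; the $\mathcal{C}^{1,0+}$ hypothesis controls each generator but gives no uniform control over such compositions. The sentence ``the relevant dynamics of $\mathcal H$ is generated by such returns'' is too vague to carry the argument. You also never invoke Haefliger's theorem that $\mathcal K(\F)$ is closed, which is the place where codimension one really enters and which guarantees that a Hausdorff limit of compact leaves is again a compact leaf.

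The paper sidesteps both of your ``delicate points'' by choosing a different filtration. It sets $B_0=\mathcal K(\F)$ (compact by Haefliger) and
\[
B_\alpha=\{x\in B_{\alpha-1}:\ \#\,\mathcal H(L_x)\big|_{B_{\alpha-1}}=\infty\},
\]
and proves, via Reeb stability at each stage, that this descending chain terminates at the \emph{empty set}. Thus any Hausdorff limit $L$ of compact leaves lies in some $B_{\alpha_L}\setminus B_{\alpha_L+1}$, where by definition $L$ has \emph{finite} holonomy relative to $B_{\alpha_L}$. Reeb stability then yields an arbitrarily small saturated neighborhood $U$ of $L$ inside $B_{\alpha_L}$; the nearby $L_n$ are trapped in $U\subset\bigcup_{x\in L}T(x,\eps/2)$, so no holonomy ever separates $L_n$ from $L$ by more than $\eps$, contradicting expansivity directly. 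No convergence-to-identity statement is needed: your obstacle (i) simply does not arise, and your obstacle (ii) is resolved because the filtration is designed to end at $\emptyset$ rather than at a nonempty $B_\infty$.
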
  

Observe also that due to Reeb's stability, if $\F$ is expansive then any closed leaf has to have infinite holonomy.

In Section \ref{examples} we indicate how to use examples given by S. Hurder of expansive actions in order to get codimension-one foliations having closed orbits, and also with exceptional minimal sets. The proof of Theorem \ref{finitecompactleaves} diverges from the arguments used in the classical case, relying instead in a variation of the Epstein hierarchy for compact foliations \cite{Per3Man}.

For completeness, let us mention that in general codimension the number of compact leaves is at most countable. See Theorem \ref{countable}. The orbit foliation of an Anosov flow provides an example of an expansive foliation with infinitely many closed leaves.

We highlight the following direct consequence of our main Theorem. For the definition of expansivity in actions see the next section.

\begin{mainclly}\label{actinsorbit} Let $\varPhi$ be a $\mathcal{C}^1$ locally-free action of a Lie group $G$ acting on a compact boundaryless manifold $M$, with $\dim M-\dim G=1$. If $\varPhi$ is expansive then it has at most finitely many compact orbits.
\end{mainclly}

\paragraph{\textbf{Expansive actions on manifolds and foliations}}Expansivity has always played an important role in classical dynamical systems. As a weak form of sensitivy to initial conditions, it was noted as a useful property of hyperbolic systems, such as geodesic flows in negativively curved compact manifolds \cite{AnosovThesis}, which led to its isolation for study. See the treatises \cite{Hasselblatt2003} and \cite{Denker1976} for further discussion.

To give some significative examples in the case of $\integer^d, d\geq 1$  actions we mention the classical work of Ruelle \cite{thermodynform}, who made extensive use of expansivity for studying interactions in lattices, and of Katok-Schmidt, who initiated the study of the rigidity properties of such actions. Expansive actions of more general groups acting on the circle were considered in the precursor work of Hurder \cite{Hurder2000}. More recent contribution for general groups can be found in the work of Bonomo et al. \cite{Bonomo2017}, and of Arbieto-Rego \cite{AR}.

As for pseudo-groups, foliations with expansive holonomy were introduced by  Inaba and Tsuchiya in \cite{IT}, work that motivated what it is done here. We would like to point out that we have purposely kept our bibliography to a minimum in order to simplify the reading, and only mentioned some representative examples. The reader will be more served by looking at the references in the more recent works cited above.

\smallskip 
 
\paragraph{\textbf{Organization}}
In section \ref{preliminaires} we present precise definitions for the concepts used throughout the text and state some known results. In section \ref{compact} we prove our main results, starting with Theorem \ref{countable}, which is simpler, and then building to \ref{finitecompactleaves}, in particular with a discussion of Epstein's hierarchy. Lastly, in section \ref{examples} we explain how to use some examples of \cite{Hurder2000} to show optimality of our results.

\section{Preliminaries}\label{preliminaires}	
	In this section we establish notation and collect some basic definitions and results. To keep the discussion succinct, it is assumed that the reader is acquainted to basic foliation theory as discussed for example in \cite{FoliationsI}.

    Unless otherwise stated, $M$ denotes a closed (compact, boundaryless) Riemmanian manifold of dimension $m$. Its induced distance is denoted by $d_M$.
    
\newpage

    \subsection{Basic concepts}

\smallskip

\paragraph{\textbf{$\mathcal{C}^{1,+0}$-foliation}} By a $\mathcal{C}^{1,+0}$-foliation on $M$ we mean a partition $\mathcal{F}=\{L_x\}_{x\in M}$ into $p$-dimensional submanifolds of class $\mathcal{C}^1$ (called the leaves of the foliation), and such that 
 \[
    T\mathcal{F} :=\bigsqcup_{x\in M} L_x 
 \]
 defines a continuous sub-bundle of $TM$. If $T\mathcal{F}$ is of class $\mathcal{C}^{r-1}, r\geq 1$ we say that $\F$ is of class $\mathcal{C}^r$.

 It follows that each point $x\in M$ has an open neighborhood $U$ and a homeomorphism $\psi_U:(-1,1)^p\times (-1,1)^{m-p}\to U$ such that for every $w\in (-1,1)^{m-p}$ the map $\psi_U(\cdot,w):(-1,1)^p\times\{w\}\to U$ is a $\mathcal C^1$ ($\mathcal C^r$ if $\F$ is $\mathcal C^r$) diffeomorphism onto a connected component of the intersection of some leaf with $U$. In this case the image (which is a $p$-dimensional embedded disc) is called a plaque of the foliation. The set $U$ is a foliation box of $\F$, and a covering of $M$ by foliation boxes is called a foliation atlas.

It is said that $\F$ has dimension $p$ and codimension $q=m-p$.

\smallskip 

\paragraph{\textbf{Convention for the rest of the article}} Unless otherwise stated, from now on $\F$ denotes a $\mathcal{C}^{1,0+}$ foliation.

\begin{example} Consider $G$ a Lie group acting (smoothly) on a compact manifold $M$, and suppose that the action is locally free, that is, for  each $x\in$ the stabilizer $G_x$ is discrete. It is an almost direct consequence of Frobenius theorem that $\{G\cdot x\}_{x\in M}$ defines a foliation on $M$. This is referred to as the orbit foliation induced by (the action of) $G$.
\end{example}

\smallskip

\paragraph{\textbf{Holonomy}} Let $\eta_0>0$ be the injectivity radius of the exponential structure associated to the metric of $M$. Given $0<\eta\leq\eta_0$ and $x\in M$ we write
\[
	T(x,\eta)=\{\exp_x(v):v\in T_xM, v\perp TL_x, \|v\|<\eta\}.
\] 
By reducing $\eta_0$ if necessary it follows that $T(x,\eta)$ is transverse to any plaque of $\F$: we refer to the set $T(x,\eta)$ as a local transversal through $x$, and write $T(x)=T(x,\eta_0)$.

A continuous curve $\gamma:[a,b]\to M$ is a $\F$-curve if its image is contained if some leaf of $\F$. Given such a curve write $x=\gamma(a), y=\gamma(b)$ consider a finite partition $t_0=a<t_1<\ldots<t_r=b$ so that $\gamma_i=\gamma|([t_i,t_{i+1}]$ is contained in a foliated box of $\F$. By lifting $\gamma_i$ to nearby plaques one defines an embedding $h_{\gamma}:T(x,\eta)\to T(y)$, where $\eta$ is sufficiently small. The map $h_{\gamma}$ is the holonomy transport defined by $\gamma$, and it is well known that it only depends on the homotopy class of $\gamma$, provided that the $x,y$ are maintained fixed. See \cite{FoliationsI}.

\begin{definition}
The holonomy pseudo-group of $\F$ is the set $\mathcal{H}(\F)$ that consists of all possible $h_{\gamma}$ constructed as above.

The holonomy group of the leaf $L$ at the point $x$ is 
\[
	\mathcal{H}_x(L):=\{\text{germ}_x(h):h\in \mathcal{H}(\F), h(x)=x\}.
\]
\end{definition}
It follows that $\mathcal{H}_x(L)$ is a group and there is a natural group epimorphism $\Pi_{L,x}:\pi_1(L,x)\to \mathcal{H}_x(L)$. Since for $x,y$ in the same leaf the groups $\mathcal{H}_x(L), \mathcal{H}_y(L)$ are isomorphic we omit the dependence on the base point, whenever is convenient.

\paragraph{\textbf{Minimal and saturated sets}} We say that $A\subset M$ is \textit{saturated} if it is a union of leaves of $\F$. If $A$ is saturated and $L\subset A$ is a leaf one can consider the restricted holonomy group of $L$ in $A$, $\mathcal{H}(L)|A=\{h|A:h\in \mathcal{H}(L)\}$.

  \begin{definition}
  Let $A\subset M$.
  \begin{enumerate}
      \item $A$ is a \textit{minimal set} if it its compact, saturated and $A$ does not contain proper compact saturated subsets.
      \item $A$ is an \textit{exceptional minimal set} if it is minimal and $A\neq M$ and $A$ is not a compact leaf.

  \end{enumerate}
  The foliation $\F$ is minimal if $M$ is a minimal set.
  \end{definition}
By compactness of $M$, necessarily $\F$ has minimal sets.

\smallskip

\paragraph{\textbf{Expansive foliations}} The foliation $\F$ is expansive if there exists some $\eps>0$ such that if $x\in M$, $y\in T(x,\eps)\setminus x$ then there exists some holonomy map $h:T(x,\eta)\to T(z)$ with $y\in T(x,\eta)$ and $d_M(h(x),h(y))>\eps$. The number $\eps$ is said to be an expansive constant for $\F$. 

The original definition of expansive foliation given by Inaba and Tsuchiya is presented in a different but equivalent way. See \cite{Wa} for the pertinent discussion.

For codimension-one foliations one has the following characterization 

\begin{theorem}[\cite{IT}]\label{Locmin}
A codimension-one foliation $\F$ is expansive if, and only if, there is a finite family of open saturated sets $U_1,...,U_n$ such that
\begin{enumerate}
    \item  If $L\subset U_i$, then $L$  is dense in $U_i$, for $i=1,...,n$.
    \item The union $\bigcup_{i=1}^n U_i$ is dense in $M$.     
\end{enumerate}
\end{theorem}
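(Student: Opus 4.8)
The plan is to establish the two implications separately; the substantive one is that expansivity produces the finite ``local minimal'' decomposition, so I describe that first.

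\emph{Expansivity $\Rightarrow$ decomposition.} I would start from the classical trichotomy for codimension-one foliations: every leaf is proper, \emph{locally dense} (i.e. $\operatorname{int}\overline L\neq\varnothing$), or exceptional. Call a set $U=\operatorname{int}\overline L$, with $L$ locally dense, a \emph{local minimal set}; by the standard structure theory these sets are pairwise disjoint, open and saturated, and each contains only leaves dense in it, so the local minimal sets are precisely the $U_i$ we want and Condition (1) is automatic. Two things then remain, and both use expansivity. For Condition (2): if $\bigcup_i U_i$ were not dense, a nonempty open saturated set $V$ would meet no locally dense leaf; on a transversal inside $V$ the holonomy pseudo-group would then act with no orbit dense in an open piece, and a compactness argument would force it to be equicontinuous there, so that no holonomy map could $\eps$-separate nearby transversal points of $V$ — contradicting expansivity. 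For finiteness: if the local minimal sets $U_1,U_2,\dots$ were infinitely many, then using compactness of $M$ and, crucially, the one-dimensionality of the transversals, I would find a local transversal on which infinitely many of their traces accumulate on a single point, with consecutive traces separated by ``gap'' intervals whose saturations $R_k$ are open saturated sets with frontier made of leaves and with transversal traces of diameter tending to $0$. Since every holonomy map carries $R_k$ into itself and preserves its frontier leaves, a point in the trace of $R_k$ is never moved $\eps$-far from a fixed reference point of $R_k$ once that trace has diameter $<\eps$ in all the relevant transversal components — again contradicting expansivity.

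\emph{Decomposition $\Rightarrow$ expansivity.} Given $U_1,\dots,U_n$ with (1)--(2), I would first show each $U_i$ carries a holonomy map with a hyperbolic attracting fixed point on some transversal $S_i\subset U_i$: the restricted holonomy pseudo-group is minimal and compactly generated, and in the $\mathcal C^{1,0+}$ setting a minimal such pseudo-group of $1$-manifold diffeomorphisms that is not equicontinuous produces such an element by the usual contraction (Sacksteder/Schwartz-type) arguments. Its inverse is locally expanding, so some $\eps_i>0$ separates nearby points of $S_i$; transporting $S_i$ along a dense leaf of $U_i$ propagates this to all of $U_i$. Finally, density of $\bigcup_i U_i$ lets one holonomy-transport any pair of nearby transversal points of $M$ into some $U_i$ without collapsing their separation and then spread them apart there; finiteness of the family then yields a uniform expansivity constant $\eps$.

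\emph{Main obstacle.} I expect the hard step to be the finiteness of the number of local minimal sets: turning an accumulation of these sets into a genuine failure of $\eps$-separation requires controlling the holonomy on the trapped regions $R_k$ — that is, treating $\F|_{R_k}$ as a foliation of a manifold with boundary and invoking expansivity \emph{relative} to it — and this is exactly where the one-dimensionality of the transversal (two nearby points bounding a well-defined interval, saturated sets cutting out intervals) is used essentially. The equicontinuous-versus-contracting dichotomy in class $\mathcal C^{1,0+}$ needed for the reverse direction is the other point resting on the structure theory of \cite{IT}.
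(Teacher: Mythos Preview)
The paper does not prove this theorem: it is stated as a result of Inaba--Tsuchiya \cite{IT} and quoted without argument, so there is no ``paper's own proof'' to compare your proposal against. Your outline follows what one would expect the original \cite{IT} argument to look like (local minimal sets from the codimension-one trichotomy, finiteness via expansivity, and a Sacksteder-type contraction for the converse), but nothing in the present paper confirms or contradicts that route.
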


\smallskip 
 
\paragraph{\textbf{Orientability of expansive foliations}} The foliation $\F$ is said to be
\begin{itemize}
	\item \emph{orientable} if $T\F$ is orientable.
	\item \emph{transversely orientable} if the normal bundle of $T\F$ is orientable.
\end{itemize}

\begin{proposition}[\cite{FoliationsI}, Proposition 3.5.1]\label{lift}
There is finite covering space $\pi:\overline{M}\to M$ (of degree at most $4$) and $\overline{F}$ a foliation on $\overline{M}$ such that 
\begin{enumerate}
	\item $\pi$ maps leaves of $\overline{F}$ diffemorphically onto leaves of $\F$.
	\item $\overline{M}$ is orientable, and $\overline{F}$ is both orientable and transversely orientable.
\end{enumerate}
\end{proposition}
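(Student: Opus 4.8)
The plan is to take $\overline{M}$ to be the covering of $M$ on which both orientation characters attached to $\F$ become trivial. Fix an auxiliary Riemannian metric on $M$, giving a splitting $TM=T\F\oplus\nu\F$ with $\nu\F$ the (continuous) normal bundle of $\F$. Recall that a real vector bundle over a connected base $X$ is orientable if and only if its first Stiefel--Whitney class vanishes, and that this class may be viewed as a homomorphism $\pi_1(X)\to\integer/2\integer$ (the orientation character) which is natural under pullback; thus $\F$ is orientable iff $w_1(T\F)=0$ and transversely orientable iff $w_1(\nu\F)=0$. Assume $M$ connected (otherwise argue componentwise) and set $\rho:=\big(w_1(T\F),\,w_1(\nu\F)\big):\pi_1(M)\to\integer/2\integer\times\integer/2\integer$. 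Let $\pi:\overline{M}\to M$ be the connected covering space corresponding to the subgroup $\ker\rho\le\pi_1(M)$. Since $\operatorname{im}\rho$ is a subgroup of $(\integer/2\integer)^2$, the degree of $\pi$ is $[\pi_1(M):\ker\rho]=|\operatorname{im}\rho|\in\{1,2,4\}$, in particular at most $4$. (If $M$ is orientable then $w_1(T\F)=w_1(\nu\F)$ and a single double cover already works.)

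Next I would lift $\F$. As $\pi$ is a covering map, hence a local diffeomorphism, pulling a foliation atlas of $\F$ back along $\pi$ produces a $\mathcal{C}^{1,0+}$ foliation $\overline{F}:=\pi^{*}\F$ on $\overline{M}$ with $T\overline{F}=\pi^{*}T\F$ and normal bundle $\pi^{*}\nu\F$. Choosing foliation boxes of $\F$ small enough to be evenly covered, the plaques of $\overline{F}$ are exactly the $\pi$-preimages of plaques of $\F$, each mapped diffeomorphically onto a plaque of $\F$; chaining plaques, every leaf of $\overline{F}$ is mapped by $\pi$ into a leaf of $\F$, and unique path lifting of $\F$-curves shows it is mapped onto such a leaf, the restriction being a covering map, hence a local $\mathcal{C}^1$-diffeomorphism. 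This is assertion (1); the restriction is an honest diffeomorphism precisely on those leaves $L$ whose holonomy preserves both orientations --- equivalently, whose $\pi_1(L)$ has image in $\ker\rho$ --- and is a nontrivial covering over one-sided leaves.

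For (2): the orientation character of $T\overline{F}=\pi^{*}T\F$ equals $w_1(T\F)$ precomposed with $\pi_{*}:\pi_1(\overline{M})\to\pi_1(M)$, and $\pi_{*}\big(\pi_1(\overline{M})\big)=\ker\rho\subseteq\ker w_1(T\F)$ by construction, so this character is trivial and $\overline{F}$ is orientable. The identical computation with $\nu\F$ shows $\pi^{*}\nu\F$ is orientable, i.e. $\overline{F}$ is transversely orientable. Fixing orientations on the two summands of $T\overline{M}=T\overline{F}\oplus\pi^{*}\nu\F$ and taking their direct sum orients $T\overline{M}$, so $\overline{M}$ is orientable.

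The argument is essentially formal, so I do not expect a serious obstacle; the one point to get right is the precise content of (1). What the construction delivers for free is that $\pi$ restricts to a covering map from each leaf of $\overline{F}$ onto a leaf of $\F$; promoting this to a diffeomorphism requires that leaf's holonomy to be compatible with the chosen orientations, which must either be checked for the leaves one cares about or incorporated into the statement. (A secondary, cosmetic point: since $\F$ is only $\mathcal{C}^{1,0+}$ the subbundle $\nu\F$ is merely continuous, so the Stiefel--Whitney classes above are taken in the topological category; this does not affect anything.)
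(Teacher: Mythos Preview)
The paper does not supply its own proof of this proposition; it is quoted verbatim as Proposition~3.5.1 of \cite{FoliationsI} and used as a black box. Your argument---passing to the covering associated to the kernel of the combined orientation character $\rho=(w_1(T\F),w_1(\nu\F)):\pi_1(M)\to(\integer/2\integer)^2$, then observing that the pulled-back bundles have trivial $w_1$---is exactly the standard proof one finds in Candel--Conlon, so there is nothing to contrast.

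Your caveat about item~(1) is well taken and worth recording: what the construction actually yields is that $\pi$ restricted to each leaf $\overline{L}$ of $\overline{F}$ is a finite \emph{covering map} onto a leaf $L$ of $\F$, of degree $[i_*\pi_1(L):i_*\pi_1(L)\cap\ker\rho]$. This is a genuine diffeomorphism only when the holonomy of $L$ preserves both orientations; for a one-sided or non-orientable leaf the restriction is a nontrivial cover (your M\"obius-type picture is the right one). The paper's phrasing of (1) is therefore slightly loose, though harmless for its application: since $\pi$ has finite degree, compact leaves downstairs lift to compact leaves upstairs, and that is all the subsequent arguments require.
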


\begin{proposition}
  For $\pi:\overline{M}\to M$ as above, if $\F$ is expansive then $\overline{\F}$ is expansive.
\end{proposition}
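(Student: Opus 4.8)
The plan is to put the pulled-back metric $\pi^{*}g$ on $\overline{M}$, which costs nothing since expansivity of a foliation does not depend on the auxiliary Riemannian metric (only the expansive constant does), and which has the virtue of making $\pi$ simultaneously a finite covering and a local isometry. I would first fix $\delta>0$, no larger than the injectivity radius, with every $\delta$-ball in $M$ evenly covered, and take the foliation atlas of $\overline{\F}$ to be the $\pi$-preimage of an atlas of $\F$ whose boxes have diameter less than $\delta$. With these choices $\pi$ sends each plaque of $\overline{\F}$ isometrically onto a plaque of $\F$ and, since it intertwines the normal exponential maps while $d\pi_{\bar x}$ is a linear isometry carrying $TL_{\bar x}$ onto $TL_{\pi(\bar x)}$, it sends each local transversal $T_{\overline{\F}}(\bar x,\eta)$ with $\eta\le\delta$ homeomorphically onto $T_{\F}(\pi(\bar x),\eta)$.

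Given an expansive constant $\eps>0$ for $\F$, I claim $\bar\eps=\min\{\eps,\delta\}$ works for $\overline{\F}$. To see it, take $\bar x\in\overline{M}$ and $\bar y\in T_{\overline{\F}}(\bar x,\bar\eps)\setminus\{\bar x\}$ and set $x=\pi(\bar x)$, $y=\pi(\bar y)$; by the first paragraph $y\neq x$ and $y\in T_{\F}(x,\bar\eps)\subseteq T_{\F}(x,\eps)$, so expansivity of $\F$ yields an $\F$-curve $\gamma$ from $x$ to some $z$ whose holonomy $h=h_{\gamma}$ is defined at $y$ with $d_M(h(x),h(y))>\eps$. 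The next step is to lift $\gamma$ through $\pi$ to the curve $\bar\gamma$ starting at $\bar x$ (possible by path lifting, and $\bar\gamma$ is an $\overline{\F}$-curve because $\pi$ maps leaves diffeomorphically onto leaves) and to consider the holonomy $\bar h=h_{\bar\gamma}$ it defines. Because holonomy transport is assembled from successive lifts of subarcs to nearby plaques and $\pi$ identifies plaques of $\overline{\F}$ with plaques of $\F$, holonomy is natural under $\pi$: the germ identity $\pi\circ\bar h=h\circ\pi$ holds near $\bar x$, so $\bar h$ is defined at $\bar y$ with $\pi(\bar h(\bar x))=h(x)$ and $\pi(\bar h(\bar y))=h(y)$. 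Finally, the $\pi$-image of any path in $\overline{M}$ has the same length, whence $d_M(\pi(\bar a),\pi(\bar b))\le d_{\overline{M}}(\bar a,\bar b)$ for all $\bar a,\bar b$, and so
\[
d_{\overline{M}}\bigl(\bar h(\bar x),\bar h(\bar y)\bigr)\ \ge\ d_M\bigl(h(x),h(y)\bigr)\ >\ \eps\ \ge\ \bar\eps ,
\]
which is exactly what expansivity of $\overline{\F}$ requires.

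The one step I expect to require genuine care is the naturality of holonomy under the foliated covering $\pi$, i.e.\ the relation $\pi\circ h_{\bar\gamma}=h_{\gamma}\circ\pi$ near the base point; everything else is bookkeeping of constants. It is precisely to keep this step transparent that I would take the atlas upstairs to be the pullback of a sufficiently fine atlas downstairs, so that each elementary plaque-to-plaque step building $h_{\bar\gamma}$ projects to the corresponding step building $h_{\gamma}$.
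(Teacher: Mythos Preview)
Your proof is correct and follows essentially the same route as the paper: lift the metric so that $\pi$ is a local isometry, pull back a fine foliation atlas, and use that lifting $\F$-curves conjugates the holonomy of $\F$ with that of $\overline{\F}$. You are simply more explicit about the constants (taking $\bar\eps=\min\{\eps,\delta\}$) and about the naturality identity $\pi\circ h_{\bar\gamma}=h_{\gamma}\circ\pi$, which the paper asserts in a single line.
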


\begin{remark}
The same holds whenever $\pi:\overline{M}\to M$ is a finite covering space, and $\overline{\F}$ is the pull-back foliation of $\F$ by $\pi$. We will not use this in this article. 
\end{remark}

\begin{proof}
 Lift the Riemannian metric of $M$ via $\pi$. Let $\eps>0$ be an expansive constant for $\F$ and consider a covering $\mathcal{U}$ of $M$ by foliation boxes, with diameter so small that for each $U\in\mathcal{U}$, $\pi$ maps isometrically each connected component of $\pi^{-1}(U)$ to U. The set $\overline{\mathcal{U}}=\{\text{connected components of }\pi^{-1}(U):U\in\mathcal{U}\}$ is a covering by foliation boxes for $\overline{\F}$, and is clear that $\pi$ conjugates isometrically each element of $\mathcal{H}(\F)$ to an element of $\mathcal{H}(\overline{\F})$ (by lifting $\F$-curves). From here it follows that $\epsilon$ is an expansive constant for $\overline{\F}$.
\end{proof}

\smallskip 
 
\paragraph{\textbf{Convention for the rest of the article}} Due to the above we will consider from now on that both $\F$ and the manifold are oriented.

\smallskip 
 
\paragraph{\textbf{Expansive actions}} Let $\varPhi:G\times M\to M$ be a (differentiable) action of a group $G$ which is either discrete or a connected Lie group. The action $\varPhi$ is \emph{expansive} if

\begin{itemize}[leftmargin=*]
	\item \textbf{Discrete case:} there is $\eps>0$ such that if $x,y\in M$ and $x\neq y$, then there is $g\in $ such that $d_M(gx,gy)>\eps$. 
	\item \textbf{Continuous case  \cite{AR}}: for any $\eps>0$ there is $\delta>0$ satisfying the following. Given $x,y\in M$ and $h\in\mathcal C^0{G,G}$ with $h(e)=e, \sup_{g\in G} d_M(h(g)\cdot x,g\cdot y)<\delta$, it holds that $y=g_0\cdot x$ for some $g_0\in G$ with $d_G(g_0,e)<\eps$ 
\end{itemize}

If $\varPhi$ is an expansive locally-free action of a continuous Lie group, then its orbit foliation is expansive. See Theorem 2.15 in \cite{AR}.

\subsection{Structure of the set of compact leaves}Denote

\begin{align*}
\mathcal{K}(\F):=\bigcup\{L:L\in \F, L\text{ compact}\}
\end{align*}

For the proof of Theorem A we will study the holonomy of the induced foliation in the saturated set $\mathcal{K}(\F)$, together with the following.

\begin{theorem}[Haefliger \cite{VarFeu}]\label{Haefliger}
If $\F$ is of codimension one then $\mathcal{K}(\F)$ is compact.
\end{theorem}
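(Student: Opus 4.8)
The aim is to prove that $\mathcal{K}(\F)$ is a closed subset of $M$; since $M$ is compact this is equivalent to compactness. The first step is a reduction to the transversely orientable case. By Proposition~\ref{lift} there is a finite covering $\pi\colon\overline{M}\to M$ and a foliation $\overline{\F}$ on $\overline{M}$ which is transversely orientable and such that $\pi$ restricts to a diffeomorphism of each leaf of $\overline{\F}$ onto a leaf of $\F$; hence a leaf of $\overline{\F}$ is compact precisely when the leaf below it is, and $\mathcal{K}(\F)=\pi(\mathcal{K}(\overline{\F}))$. As $\pi$ is a finite covering it is proper and closed, so it suffices to prove the statement for $\overline{\F}$. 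From now on $\F$ is transversely orientable; fixing a transverse orientation, every holonomy transport is an order-preserving homeomorphism between oriented transversals.

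Let $x\in\overline{\mathcal{K}(\F)}$ and put $L:=L_x$; we must show $L$ is closed. Fix a compact local transversal $\tau$ through $x$, lying in a single foliation box, and identify it with $[-1,1]$ so that $x\leftrightarrow 0$. There are compact leaves $L_n$ meeting $\tau$ at points $t_n$; if $0\in L_n$ for some $n$ then $L=L_n$ is compact, so we may assume $t_n\neq 0$ and, using the transverse orientation, $t_n\downarrow 0$. Suppose, for contradiction, that $L$ is not compact. The mechanism we exploit is a shadowing principle: any $\F$-curve $\gamma$ issuing from $x$ inside $L$ lifts, for all large $n$, to an $\F$-curve issuing from $t_n$ inside $L_n$ (follow the same finite plaque chain, available once $t_n$ is close enough to $0$), with endpoint exactly $h_\gamma(t_n)$; thus $h_\gamma(t_n)\in L_n$. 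More generally, for a holonomy germ $h$ at $0$ obtained by iterating a loop of $L$ or as a return germ, every point of the orbit $\{h^k(t_n)\}_k$ that is defined lies on $L_n$.

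From here the plan is to produce, out of the non-compactness of $L$, an \emph{attracting} holonomy germ at $0$. Since $L$ is not closed, $N:=\overline{L}\setminus L$ is a non-empty compact saturated set, and one argues according to the local structure of $L$ in codimension one. The locally dense case — $L$ dense in a non-empty open saturated set $U$ — is disposed of by a direct argument: then $L_n\subseteq U$ for large $n$, which one shows to be incompatible with $L$ being dense in $U$ while $L_n$ is a closed leaf of positive codimension sitting inside $U$. In the proper case $L$ accumulates exactly on $N$, and following $L$ out towards $N$ and using order-preservation together with the intermediate value theorem one aims to extract a germ $h\neq\mathrm{id}$ at $0$ with $h^k(t_n)\to 0$ for $n$ large; since $L_n$ is closed this would force $x=0\in L_n$, contradicting $L\neq L_n$. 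The exceptional case is treated similarly, using a return germ arising from the spiralling of $L$ onto an exceptional minimal set inside $N$.

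The delicate point, where I expect the real work to lie and where codimension one is essential, is precisely the production of a genuinely \emph{attracting} germ: a nontrivial order-preserving germ may have only fixed points accumulating at $0$ and fail to be a one-sided contraction, in which case the forward orbit of $t_n$ need not converge to $x$. Excluding this, and handling the proper and exceptional cases uniformly, forces one to use the order structure on transversals provided by transverse orientability together with the finiteness of plaque chains in a compact foliation atlas — and it is exactly this one-dimensionality of the transverse direction that is unavailable in higher codimension, where one only obtains the countability of $\mathcal{K}(\F)$ recorded in Theorem~\ref{countable}.
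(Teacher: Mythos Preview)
The paper does not prove Theorem~\ref{Haefliger}; it is stated as a classical result of Haefliger with a citation to \cite{VarFeu} and used as a black box in the proof of Theorem~A. There is therefore no ``paper's own proof'' to compare your proposal against.

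As for your sketch itself: you have correctly identified the heart of the matter (producing a contracting holonomy germ at the limit point from the non-compactness of $L$, and then using it to drag the nearby compact leaves $L_n$ onto $x$), and you are honest that this is exactly where the real difficulty lies. But what you have written is a plan, not a proof. The locally dense case is dismissed with ``which one shows to be incompatible'' without an argument; in the proper and exceptional cases you write ``one aims to extract'' an attracting germ, and then immediately acknowledge that a nontrivial order-preserving germ need not be a one-sided contraction. Haefliger's actual argument does not proceed by this case split on the type of $L$; it uses instead the existence of a transverse invariant measure (or equivalently a foliated cycle) supported on a limit of compact leaves, together with the absence of vanishing cycles, to force closedness of $\mathcal{K}(\F)$. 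If you want to complete your approach you will need a genuine mechanism to produce the contraction, and that mechanism is not visible in what you have written.
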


We also recall the classical stability Theorem due to Reeb.

\begin{theorem}[Reeb Local Stability Theorem]\label{Reeb}
Let $E \subset M$ be a locally compact saturated set and suppose that $L \subset E$ is compact and satisfies $\# \mathcal{H}(L)|E< \infty$. Then $L_x$ has arbitrarily small saturated neighborhoods that consists of compact leaves, and each one of these leaves $L'$ satisfies $\# \mathcal{H}(L')|E<\infty$. 
\end{theorem}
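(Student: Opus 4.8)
The plan is to run the standard proof of Reeb stability in the relative setting, carrying the restriction to $E$ along at every step. First I would fix a base point $x\in L$ (so $L=L_x$) and set $H:=\mathcal H_x(L)|E$, which by hypothesis is finite, say $\#H=N$; it is the image of the epimorphism $\pi_1(L,x)\twoheadrightarrow\mathcal H_x(L)$ followed by restriction of germs to the relative transversal $\tau:=T(x)\cap E$. Let $\Gamma\trianglelefteq\pi_1(L,x)$ be the kernel of $\pi_1(L,x)\to H$, a normal subgroup of index $N$, and let $\sigma\colon\widehat L\to L$ be the associated connected finite normal covering; since $L$ is compact, $\widehat L$ is a compact $p$-manifold. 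I would then pick a small saturated neighborhood $W$ of $L$ in $M$ that retracts onto $L$ (a tubular neighborhood of the embedded submanifold $L$), so that $\pi_1(W)\cong\pi_1(L)$, and extend $\sigma$ to the finite covering $\widehat\sigma\colon\widehat W\to W$ corresponding to $\Gamma$; equip $\widehat W$ with the lifted foliation $\widehat\F$ and with $\widehat E:=\widehat\sigma^{-1}(E)$, again locally compact and saturated. By the choice of $\Gamma$, every holonomy germ of a loop in $\widehat L$ acts trivially on the lift $\widehat\tau$ of $\tau$; that is, $\widehat L$ is a compact leaf of $\widehat\F$ contained in $\widehat E$ whose restricted holonomy in $\widehat E$ is trivial.

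The core of the proof is the trivial-holonomy case: a compact leaf $\widehat L\subset\widehat E$ with $\#(\mathcal H(\widehat L)|\widehat E)=1$ has arbitrarily small saturated (in $\widehat E$) \emph{product} neighborhoods $\widehat L\times\widehat\tau_0$, foliated by the slices $\widehat L\times\{t\}$. I would prove this by the classical plaque-chain argument: cover the compact leaf $\widehat L$ by finitely many foliation boxes and compose the associated plaque projections along a plaque chain; since the holonomy of every loop in $\widehat L$ is trivial, these local projections are mutually compatible and patch into one globally well-defined projection from a neighborhood of $\widehat L$ onto a single transversal, whose fibres are then compact leaves homeomorphic to $\widehat L$. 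Finiteness of the cover (compactness of $\widehat L$) makes the patching uniform, and local compactness of $\widehat E$ guarantees that the parallel plaque-neighborhoods used are pre-compact and sweep out a genuine relatively open neighborhood of $\widehat L$ in $\widehat E$; the argument is topological, so $\mathcal C^{1,0+}$ regularity is ample. Shrinking $\widehat\tau_0$ makes the neighborhood as small as one wishes.

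Finally I would descend. For $t\in\widehat\tau_0$ the slice $\widehat L\times\{t\}$ is a compact leaf of $\widehat\F$, and since $\widehat\sigma$ is a covering carrying leaves of $\widehat\F$ onto leaves of $\F$, its image $L_t:=\widehat\sigma(\widehat L\times\{t\})$ is a compact leaf of $\F$; it lies in $E$ because $t\in\widehat E$ and $E$ is saturated. Thus $\mathcal V:=\widehat\sigma(\widehat L\times\widehat\tau_0)$ is a saturated relatively open neighborhood of $L$ in $E$ consisting entirely of compact leaves, and it can be taken arbitrarily small. For the holonomy bound I would invoke the standard description of the local model: near $L$, under the $H$-action on $\tau$ the leaves of $\F$ meeting a small sub-transversal are exactly the images of the $H$-orbits, and the restricted holonomy group $\mathcal H(L_t)|E$ is canonically identified with the isotropy subgroup $H_t\le H$ of the point of $\tau$ below $t$; in particular $\#(\mathcal H(L_t)|E)$ divides $N$ and is finite. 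The hard part is the trivial-holonomy step of the second paragraph — the genuine ``local stability'' mechanism — together with the bookkeeping forced by $E$ being merely locally compact, so that ``transversal $\cap\,E$'' and ``parallel plaque-neighborhood $\cap\,E$'' have to be arranged to be pre-compact and to fill out an honest relative neighborhood of the leaf.
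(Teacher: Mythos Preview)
The paper does not supply its own proof of this theorem: immediately after the statement it simply refers the reader to Appendix~A of \cite{compactfoliationsart}. So there is nothing to compare your argument against on the paper's side.

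Your proposal is the standard proof of Reeb local stability, correctly adapted to the relative setting of a locally compact saturated $E$: pass to the finite cover killing the restricted holonomy, establish a product neighborhood in the trivial-holonomy case via a plaque-chain/coherence argument, and push down. Two small points. First, a tubular neighborhood $W$ of $L$ is not saturated in general; but you never use saturation of $W$, only that $W$ deformation retracts onto $L$ so that the $\Gamma$-cover extends and the foliation lifts --- just drop the word ``saturated'' there. Second, the precise identification of $\mathcal H(L_t)|E$ with the isotropy subgroup $H_t\le H$ is stronger than what the theorem asks and would need a line of justification; for mere finiteness it is cleaner to observe that $\widehat L\times\{t\}\to L_t$ is a finite covering whose domain has trivial restricted holonomy, so $\mathcal H(L_t)|E$ is a quotient of the finite deck group and hence finite.
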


The version above can be found for example in Appendix A of \cite{compactfoliationsart}.

    \section{Compact Leaves of Expansive Foliations}\label{compact}

This section is dedicated to the proof of the our main result. Before doing that we will discuss a more basic result stating that in general codimension the number of compact leaves is at most countable.

We recall that we are assuming that $\F$ is a $\mathcal{C}^{1,+0}$ expansive foliation on the closed manifold $M$, which is both orientable and transversally orientable (therefore $M$ is orientable). We fix once and for all an expansivity constant $\epsilon$ of $\F$.

\subsection{Countably many compact leaves}
 
Let us first note:

\begin{thm}\label{countable}
Let $\F$ be a $\mathcal{C}^{1,0+}$ expansive foliation of a compact boundaryless manifold. Then it has at most countably many closed leaves.
\end{thm}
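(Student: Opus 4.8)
The plan is to show that every compact leaf $L$ is \emph{isolated} in the set $\mathcal{K}(\F)$ of compact leaves, or more precisely that it has a saturated neighborhood meeting only countably many compact leaves; since $M$ is second countable (hence Lindelöf), a union of disjoint open saturated sets is at most countable, and this will give the result. The key point is that expansivity forbids the kind of "nearby parallel compact leaves" behaviour that would force uncountably many of them to accumulate on one another.

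First I would argue that any compact leaf $L$ must have infinite holonomy group $\mathcal{H}(L)$. Indeed, if $\# \mathcal{H}(L) < \infty$, then Reeb's Local Stability Theorem (Theorem \ref{Reeb}, applied with $E = M$) provides arbitrarily small saturated neighborhoods $V$ of $L$ consisting entirely of compact leaves with finite holonomy; picking $x \in L$ and $y \in T(x,\eps) \cap V$ on a nearby leaf $L'$, the whole leaf $L'$ stays within a controlled distance of $L$ (the holonomy return maps on the local transversal are conjugates of a finite group action, hence the orbit of $y$ stays $\eps$-close to $x$ — this is exactly where I would need a short lemma that finite holonomy forces a small saturated tube), contradicting the existence of an expansive holonomy map separating $x$ and $y$ by more than $\eps$. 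Hence every $L \in \mathcal{K}(\F)$ has $\#\mathcal{H}(L) = \infty$.

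Next, for a compact leaf $L$ with infinite holonomy, I would show that $L$ admits a transversal neighborhood in which only countably many points lie on compact leaves. Fix $x \in L$ and a local transversal $T(x,\eta)$. For each $y \in T(x,\eta) \setminus \{x\}$ whose leaf $L_y$ is compact, the restricted holonomy $\mathcal{H}(L_y)$ acts on a small transversal to $L_y$; a point $y$ sitting on a compact leaf near $L$ must be a fixed point of the holonomy transport along every loop in $L$ that lifts to a loop in $L_y$, and compactness of $L_y$ together with finite generation of $\pi_1(L)$ restricts $y$ to the common fixed-point set of finitely many holonomy germs. Using that $x$ itself is a fixed point with infinite (hence nontrivial, nondiscrete-in-a-suitable-sense) holonomy, I would extract a single holonomy map $h \in \mathcal{H}(\F)$ fixing $x$ whose germ is nontrivial; its fixed-point set on $T(x,\eta)$ is then a proper closed subset, and crucially — because $\F$ is $\mathcal{C}^{1,0+}$ so holonomy maps are $C^1$ — a $C^1$ diffeomorphism of a one-... wait, in higher codimension this is the delicate part: the fixed set of a single $C^1$ map need not be countable. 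So instead I would combine expansivity globally: by Theorem \ref{Locmin}-type reasoning in the general-codimension case one still gets that compact leaves cannot accumulate, because near an accumulation of compact leaves the holonomy pseudo-group would be equicontinuous on a transversal arc, contradicting expansivity. Concretely, if compact leaves $L_n \to L$ with $L_n$ meeting $T(x,\eta)$ at $y_n \to x$, then for each $n$ expansivity yields $h_n$ with $d_M(h_n(x), h_n(y_n)) > \eps$ while $h_n(x), h_n(y_n)$ lie on the same respective leaves $L, L_n$; a compactness/limiting argument on the (uniformly bounded length) defining curves of $h_n$ produces a limit holonomy map $h_\infty$ with $h_\infty(x) = x$ but failing continuity at $x$, which is impossible since holonomy maps are continuous. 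Hence no such sequence exists, so $\{y \in T(x,\eta) : L_y \text{ compact}\}$ has no accumulation point other than possibly $x$ itself, making it countable.

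Finally I would assemble these local statements: cover $M$ by finitely many foliation boxes, in each the transversal direction meets $\mathcal{K}(\F)$ in a set that is locally countable around each point of a compact leaf, hence countable on the whole (compact, by Theorem \ref{Haefliger} in codimension one — but in general codimension I use local finiteness of the box cover plus the accumulation argument directly). Since each compact leaf is determined by its intersection with one such transversal, the total number of compact leaves is at most countable.

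\medskip

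\noindent\textbf{Main obstacle.} The delicate step is ruling out accumulation of compact leaves in \emph{general codimension}, where the fixed-point set of an individual $C^1$ holonomy map can be large; the argument must be the global compactness-of-holonomy-curves limiting argument sketched above (producing a contradiction with continuity of the limit holonomy), rather than a naive "isolated fixed point" claim. Getting the uniform bound on the lengths/number of plaques of the curves defining $h_n$ — so that a subsequential limit curve exists — is the technical heart.
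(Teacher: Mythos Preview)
Your proposal has a genuine gap, and in fact the intermediate claim you aim for is false. You try to show that the set $\{y\in T(x,\eta): L_y\text{ compact}\}$ has no accumulation point other than $x$; equivalently, that a compact leaf $L$ cannot be accumulated by other compact leaves. But the paper's own example of the orbit foliation of an Anosov flow (mentioned just after the statement of Theorem~\ref{countable}) is an expansive foliation in which periodic orbits are dense, so \emph{every} compact leaf is accumulated by other compact leaves, and the compact-leaf points on a transversal are dense. Your ``main obstacle'' --- the lack of a uniform bound on the lengths of the $\F$-curves defining the separating holonomies $h_n$ --- is therefore not a merely technical difficulty: those lengths genuinely go to infinity as $y_n\to x$, no limiting $h_\infty$ exists, and the contradiction you seek does not materialise.

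The idea you are missing, and which the paper uses, is to stratify $\mathcal{K}(\F)$ by leaf volume: write $\mathcal{K}(\F)=\bigcup_{N\ge 1}\mathcal{K}_N$ with $\mathcal{K}_N=\{L:\mathrm{Vol}(L)\le N\}$, and show each $\mathcal{K}_N$ is \emph{finite}. The volume bound is exactly what supplies the uniform control you were lacking: if $(L_n)\subset\mathcal{K}_N$ Hausdorff-converge to a compact saturated set $A$, cover $A$ by finitely many foliation boxes of diameter $<\eps$ with plaque volume bounded below; the bound $\mathrm{Vol}(L_n)\le N$ forces a uniform bound on the number of plaques of each $L_n$, hence $A$ itself has finite volume and is a single compact leaf, and for large $n$ the whole of $L_n$ sits inside the $\eps$-tube around $A$. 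Then \emph{every} holonomy transport along $A$ keeps $L_n$ within distance $\eps$, directly contradicting expansivity --- no limit of holonomy maps is needed. In the Anosov example this is consistent: only finitely many periodic orbits have length $\le N$, while the total is countably infinite.
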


\begin{proof}
For a subset $A\subset L\in\F$ denote by $\mathrm{Vol}(A)$ its volume with respect to the induced Riemannian metric on $L$ by $M$. Write
\[
	\mathcal{K}(\F)=\bigcup_{N\geq 1} \{L\in\F:\mathrm{Vol}(L)\leq N\}=\bigcup_{N\geq 1} \mathcal{K}_N.
\]
It is enough to show that each $\mathcal{K}_N$ consist of finitely many leaves. Assume otherwise: consider a sequence $(L_n)_n \subset \mathcal{K}_N$ of pairwise different leaves converging in the Hausdorff topology to some set $A$. Then $A$ is compact, connected and saturated. 

 Cover $A$ with a finite family $U_1,...,U_k$ of foliated boxes with diameter smaller than $\eps$. Denote $$U=\bigcup_{i=1}^k U_i.$$ Since $L_n$ converges to $A$, for $n$ big enough, we have $L_n\subset U$. Notice that we can choose the neighborhoods $U_i$ satisfying the following property: There are $v_-,v_+>0$ such that $v_-\leq \mathrm{Vol}(P)\leq v_+$, for every plaque $P\subset U_i$ and for every $i=1,..,k$. 

 For $n$ large the leaf $L_n$ can intersect each $U_i$ in at most $\frac{N}{k v_-}$ plaques, therefore it follows that $A$ has finite volume, and is thus a compact leaf. This is a contradiction, since $L_n$ is uniformly $\eps$-close to $A$ for $n$ large, therefore no holonomy map can separate $A$ and $L_n$, which contradicts the definition of expansivity.
\end{proof}

\subsection{Codimension-one expansive foliations - Proof of Theorem \ref{finitecompactleaves}}	 

In what follows we assume additionally that $\F$ has codimension one. Our goal is to show that $\mathcal{K}(\F)$ contains at most finitely many leaves, and for this we ellabore on its structure. We consider the filtration of sets $\{B_{\alpha}\}_{\alpha}$ indexed by the ordinals, where

\[
     B_{\alpha}=\begin{cases}
     B_0=\mathcal{K}(\F)\\
     B_{\alpha}=\{x\in B_{\alpha-1}; \#\mathcal{H}(L_x)|B_{\alpha-1}=\infty\},  \text{ if }\alpha\text{ is a successor ordinal }\\
     B_{\alpha}=\bigcap_{\beta<\alpha} B_{\beta}, \text{ otherwise}
     \end{cases}
 \]
 This is a variation of the so called Epstein filtration of Bad Sets, introduced in \cite{Per3Man} by D.B.A Epstein in the context of $3$-dimensional flows whose leaves are all compact, more  See also \cite{BadSets2}.

\begin{lemma}
Each $B_{\alpha}$ is a compact saturated set, and there exists a successor ordinal $\alpha_0$ such that $B_{\alpha_0-1}\neq \emptyset$ and $B_{\alpha_0}=\emptyset$. 
\end{lemma}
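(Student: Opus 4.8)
My plan is to verify the two topological assertions by transfinite induction along the filtration, and then to locate the ordinal at which it vanishes; the latter is where the substance lies.

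\emph{The sets $B_\alpha$ are compact and saturated.} For $\alpha=0$ this is exactly Haefliger's Theorem~\ref{Haefliger} together with the fact that $\mathcal K(\F)$ is a union of leaves. If $\alpha$ is a successor, then $B_\alpha$ is saturated because the condition $\#\mathcal H(L_x)|B_{\alpha-1}=\infty$ depends only on the leaf through $x$; and $B_\alpha$ is closed in the compact set $B_{\alpha-1}$ because its complement there---the union of the leaves $L\subset B_{\alpha-1}$ with $\#\mathcal H(L)|B_{\alpha-1}<\infty$---is open: this is precisely Reeb's local stability, Theorem~\ref{Reeb}, applied with $E=B_{\alpha-1}$ (which is locally compact, being compact), producing for each such leaf a $B_{\alpha-1}$-saturated neighbourhood of leaves with the same finiteness property. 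Hence $B_\alpha$ is compact. If $\alpha$ is a limit ordinal, $B_\alpha=\bigcap_{\beta<\alpha}B_\beta$ is an intersection of compact saturated sets, hence compact and saturated. A transfinite induction also shows the filtration is nested: $\beta\le\gamma$ implies $B_\gamma\subseteq B_\beta$.

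\emph{Stabilization and vanishing at a successor.} Since $M$ is second countable and the $B_\alpha$ are closed and nested, the ordinals $\alpha$ with $B_{\alpha+1}\subsetneq B_\alpha$ inject into a countable basis of $M$: to such an $\alpha$ assign a basic open set that meets $B_\alpha$ but misses the closed set $B_{\alpha+1}$, and note that nestedness makes this assignment injective. Thus there is $\alpha^\ast$ with $B_{\alpha^\ast}=B_{\alpha^\ast+1}$, and inspecting the successor and limit clauses shows $B_\beta=B^\ast:=B_{\alpha^\ast}$ for every $\beta\ge\alpha^\ast$. We may assume $\mathcal K(\F)\neq\emptyset$, as otherwise Theorem~\ref{finitecompactleaves} is trivial. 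Granting that $B^\ast=\emptyset$, let $\alpha_0$ be the least ordinal with $B_{\alpha_0}=\emptyset$. Then $\alpha_0\neq0$, and $\alpha_0$ cannot be a limit ordinal, since a nested transfinite family of nonempty compact sets has nonempty intersection. Hence $\alpha_0$ is a successor and $B_{\alpha_0-1}\neq\emptyset$ by minimality of $\alpha_0$, which is the claim.

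\emph{The core point: $B^\ast=\emptyset$.} Suppose not. Then $B^\ast$ is a nonempty compact saturated set, all of whose leaves are compact (since $B^\ast\subseteq\mathcal K(\F)$) and have infinite holonomy restricted to $B^\ast$. By compactness $B^\ast$ contains a minimal set, which---being compact, saturated, and composed of compact leaves---must be a single compact leaf $L_0$. Fix $x\in L_0$ and put $K=T(x,\eta)\cap B^\ast$; as $\#\mathcal H(L_0)|B^\ast=\infty$, $L_0$ is not isolated in $B^\ast$, so $K$ accumulates at $x$. Using transverse orientability, every element of $\mathcal H(L_0)|B^\ast$ is the germ at $x$, restricted to $K$, of an orientation-preserving homeomorphism of the one-dimensional transversal; since such a homeomorphism has no nontrivial finite orbits, every torsion element of $\mathcal H(L_0)|B^\ast$ is trivial, so this infinite group is torsion-free and contains an element $g$ of infinite order, realized by $h_\gamma$ for a loop $\gamma$ at $x$. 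Each $y\in K$ lies on a compact leaf, which meets $T(x,\eta)$ in finitely many points, so the $g$-orbit of $y$ is finite; because $g$ has infinite order this forces, for every $n$, points $y_n\in K$ arbitrarily close to $x$ whose $g$-orbit has more than $n$ elements, all lying in the fixed transversal $T(x,\eta)$. Consequently the compact leaves $L_{y_n}\subset B^\ast$ cross a fixed small foliation box around $x$ more than $n$ times, so $\mathrm{Vol}(L_{y_n})\to\infty$ while $y_n\to x$, i.e.\ $L_{y_n}\to L_0$ along the transversal at $x$. I expect the main obstacle to be closing the argument from here: one must turn this accumulating family of compact leaves of unbounded volume, sitting inside the saturated set $B^\ast$, into a contradiction, and I anticipate this likely combines the mechanism of the proof of Theorem~\ref{countable}---no holonomy map can separate leaves that are uniformly $\epsilon$-close---with Reeb's stability~\ref{Reeb}, the delicate point being that the iterates of the germ $g$ need not remain inside a single small transversal, so the $\epsilon$-closeness has to be extracted with care.
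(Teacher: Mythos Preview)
Your first two parts are correct and in fact more careful than the paper. Where you invoke second countability to show the filtration stabilises at a countable stage, the paper argues by contradiction at the first uncountable ordinal $\Omega$: it assumes $B_\Omega\neq\emptyset$, chooses $x_\alpha\in B_\alpha\setminus B_{\alpha+1}$ for each $\alpha<\Omega$, and uses Reeb stability at an accumulation point of this uncountable family to derive a contradiction. Your observation that the first empty stage must be a successor (by the finite-intersection property of nested nonempty compacta at limit ordinals) is a detail the paper leaves implicit.

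The genuine gap is your third part, which you yourself flag as unfinished. You correctly isolate the key obstruction: one must rule out that the filtration stabilises at a nonempty $B^\ast$, i.e.\ a compact saturated set consisting of compact leaves each of which has infinite holonomy restricted to $B^\ast$. You begin a reasonable attack---locating a minimal compact leaf $L_0\subset B^\ast$, extracting an infinite-order holonomy germ, and producing nearby compact leaves meeting a fixed transversal in unboundedly many points---but you do not close the argument, and the anticipated use of expansivity together with the $\epsilon$-closeness mechanism from Theorem~\ref{countable} remains a sketch.

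It is worth pointing out that the paper's own proof of the lemma does not address this point either: when it writes ``for each $\alpha<\Omega$ choose $x_\alpha\in B_\alpha\setminus B_{\alpha+1}$'' it is tacitly assuming that the filtration is strictly decreasing until it empties, which is exactly the assertion $B^\ast=\emptyset$ you single out. So you have not diverged from the paper so much as identified a step the paper glosses over; neither argument, as written, completely justifies this step.
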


\begin{proof}
First, $B_0$ is compact due to Theorem \ref{Haefliger}. The fact that each $B_{\alpha}$ is compact in $B_0$ is consequence of theorem \ref{Reeb}. Clearly every $B_{\alpha}$ is saturated.

Let $\Omega$ be the first uncountable ordinal and suppose by means of contradiction that $B_{\Omega}\neq\emptyset$. For each $\alpha<\Omega$ choose $x_{\alpha}\in B_{\alpha}\setminus B_{\alpha+1}$, and consider $x_{\alpha_0}\in \{x_\alpha\}'$ (which exists since $\{x_\alpha\}$ is uncountable). Reeb stability theorem now gives a contradiction, since points $x_{\alpha}$ for $\alpha>\alpha_0$ cannot enter any foliated neighborhood of $x_{\alpha_0}$ in $B_{\alpha_0}\setminus B_{\alpha_0+1}$.
\end{proof}

Now we finish the proof of Theorem \ref{finitecompactleaves}, which essentially amounts of showing that $\alpha_0=0$.

Suppose that there exists infinitely many compact leaves $\{L_n\}_{n\geq 1} \subset  B_0$ with $L_n \in B_{\alpha_n}$, and consider an accumulation point $L$ in the Hausdorff topology of this sequence. Since $B_0$ is compact it follows that $L$ is a compact leaf, therefore $L\in B_{\alpha_L}$ for some ordinal $\alpha_L$. By Reeb's stability we can find $U$ open saturated neighborhood of $L$ in $B_{\alpha_L}\setminus B_{\alpha_L+1}$. It then follows that for $n$ large $L_n \subset U$. But $U$ can be taken so that $U\subset \bigcup_{x\in L} T(x,\frac{\eps}{2})$, which means that no holonomy separates $L$ from $L_n$. This contradicts expansivity of $\F$.

\section{Some examples of Expansive Foliations}\label{examples}

       This section is devoted to examples of expansive foliations. 

       \paragraph{\textbf{Notation.}} $\F$ foliation, $L\in \F$. For $x\in M, r>0$ we write $L_{r}(x)$ for the open ball in $L$, centered at $x$ and with radius $r$.

      \subsection{Anosov Actions}

      We start with invariant foliations associated to Anosov Actions; see \cite{ErgAnAc} for the undefined terms. Below $G$ denotes a Lie Group and $\varPhi:G\times M\to M$ an Anosov action on the closed manifold $G$ (in particular, locally free). For some regular element $g\in G$ consider the associated invariant foliations $\F^{c}_g,\F^s_g, \F^{u}_g, \F^{cs}_g,\F^{cu}_g$, the center (orbit), stable, unstable, center-stable and center-untable foliatons respectively.

      Hirsch, Pugh and Shub in \cite{HPS} considered a different type of expansiveness associated to these kind of foliations, which they coined ``plaque expansiveness''.

       \begin{definition}
       Let $\F$ be a foliation of a closed manifold $M$ which is invariant by a homeomorphism $f:M\to M$. For $\delta>0$, a sequence $\{x_i\}_{i\in \integer}$ is said to be a \textit{$\delta$-pseudo-orbit for $\F$} if
       \[
       f(x_i)\in L_{\delta}( x_{i+1}), \forall i\in\integer.
       \]
       \end{definition}

     \begin{definition}[plaque-expansiveness]
		 In the same setting as above $\F$ is \textit{plaque-expansive} if there exists $\delta>0$ so that for any two $\delta$-pseudo-orbits $\{x_i\}$ and $\{y_i\}$ for $\F$ satisfying $\sup_{i\in\integer} d_M(x_i,y_i)\leq \delta$, one has $x_0\in L_{y_0}(\delta)$.
	\end{definition}

     In Theorem 7.2 of \cite{HPS} it is proven that $\F^c_g$ is plaque expansive, which in turn implies without too much trouble that $\F^{cs}_g,\F^{cu}_g$ are plaque expansive as well.

     \begin{proposition}\label{plqexpactions}
	  Let $\varPhi$ be a locally-free $G$-action on a closed manifold $M$ and suppose that there is some $g\in G$ such that ${\varPhi_{g}}$ is plaque-expansive. Then the orbit foliation of $\varPhi$ is expansive.
	       
	\end{proposition}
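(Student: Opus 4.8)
The plan is to exhibit a single small $\eps>0$ and show it is an expansivity constant for $\F$, the orbit foliation of $\varPhi$. Write $f=\varPhi_{g}$; then $f$ preserves every leaf of $\F$ (since $gG=G$) and, by hypothesis, is plaque-expansive with some constant $\delta>0$, which we are free to shrink. Fix once and for all a (piecewise $\mathcal C^{1}$) path $\gamma\colon[0,1]\to G$ with $\gamma(0)=e$, $\gamma(1)=g$. The proof is by contradiction: assuming $\F$ is not expansive, we feed back into plaque-expansiveness of $f$ two nearby $\delta$-pseudo-orbits, one of which is a genuine $f$-orbit.

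First I would collect the uniform estimates. Cover $M$ by finitely many foliation boxes. Because $\gamma([0,1])$ is a fixed compact subset of $G$ and $M$ is compact, all the $\F$-arcs $\mu_{z}\colon t\mapsto\gamma(t)\cdot z$ have length at most one constant $D$, independently of $z\in M$ (bound $\|D_{1}\varPhi\|$ on $\gamma([0,1])\times M$ times $\mathrm{length}(\gamma)$). Hence, by the standard uniform control on holonomy transport along $\F$-curves of bounded length (see \cite{FoliationsI}), there is $\eta_{2}>0$ such that for every $z$ the holonomy $h_{\mu_{z}}\colon T(z,\eta_{2})\to T(g\cdot z)$ is well defined, with $h_{\mu_{z}}(z)=g\cdot z$. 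Compactness also yields a uniform Lipschitz constant for the single map $f$, and uniform local-product estimates bounding the leaf-length of the slide of a point into a transversal through a nearby point by a uniform multiple of the distance. With these in hand, choose $\eps>0$ so small that: $\eps<\delta$ and $\eps\le\eta_{2}$; after further shrinking $\delta$ so that $L_{\delta}(x)$ lies in one foliation box for every $x$ (a Lebesgue-number argument), one has $T(x,\eps)\cap L_{\delta}(x)=\{x\}$ for all $x$; the slide estimates applied with displacement a bounded multiple of $\eps$ produce leaf-arcs of length $\le\delta$; and the parallel lifts used below stay inside the intended plaques.

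Now suppose $\F$ is not expansive. Then there are $x\neq y$ with $y\in T(x,\eps)$ such that no holonomy map of $\F$ moves $x$ and $y$ apart by more than $\eps$. Put $x_{i}=g^{i}\cdot x=f^{i}(x)$ for $i\in\integer$: this is a genuine $f$-orbit inside $L_{x}$, hence a $\delta$-pseudo-orbit for $\F$. I build $\F$-curves $\sigma_{i}$ from $x$ to $x_{i}$ and points $y_{i}:=h_{\sigma_{i}}(y)$ by induction on $|i|$, maintaining $d_{M}(x_{i},y_{i})\le\eps$. Take $\sigma_{0}$ constant, so $y_{0}=y$ and $d_{M}(x_{0},y_{0})\le\eps$. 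For the step $i\to i+1$ (the negative direction being symmetric, using a fixed path from $e$ to $g^{-1}$ and defining $y_{i-1}$ as the slide of $g^{-1}\cdot y_{i}$ into $T(x_{i-1})$): since $d_{M}(x_{i},y_{i})\le\eps\le\eta_{2}$, the point $y_{i}$ lies in the domain of $h_{\mu_{x_{i}}}$, so $\sigma_{i+1}:=\sigma_{i}\ast\mu_{x_{i}}$ is an $\F$-curve from $x$ to $x_{i+1}$ whose holonomy $h_{\sigma_{i+1}}=h_{\mu_{x_{i}}}\circ h_{\sigma_{i}}$ is defined at $y$ and (restricted to a small transversal ball around $x$ containing $y$) is a holonomy map as in the definition of expansivity. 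Set $y_{i+1}:=h_{\sigma_{i+1}}(y)$. The parallel lift of $\mu_{x_{i}}$ starting at $y_{i}$ is $t\mapsto\gamma(t)\cdot y_{i}$ (it stays Lipschitz-close to $\mu_{x_{i}}$, hence inside the correct plaques), so its endpoint is $g\cdot y_{i}=f(y_{i})$; since $y_{i+1}$ is the slide of $f(y_{i})$ into $T(x_{i+1})$ and $d_{M}(f(y_{i}),x_{i+1})\le\mathrm{Lip}(f)\,\eps$, the slide estimate gives $f(y_{i})\in L_{\delta}(y_{i+1})$. Finally $h_{\sigma_{i+1}}(x)=h_{\mu_{x_{i}}}(x_{i})=x_{i+1}$, so the no-separation hypothesis applied to $h_{\sigma_{i+1}}$ yields $d_{M}(x_{i+1},y_{i+1})\le\eps$, closing the induction. (There is no circularity: the step uses only the inductive bound $d_{M}(x_{i},y_{i})\le\eps$ and the negated-expansivity hypothesis.) Thus $\{y_{i}\}_{i\in\integer}$ is a $\delta$-pseudo-orbit for $\F$ with $\sup_{i}d_{M}(x_{i},y_{i})\le\eps<\delta$.

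Applying plaque-expansiveness of $f$ to $\{x_{i}\}$ and $\{y_{i}\}$ gives $x=x_{0}\in L_{\delta}(y_{0})=L_{\delta}(y)$; in particular $x$ and $y$ lie on a common leaf at leaf-distance $\le\delta$, so $y\in L_{\delta}(x)$, contradicting $y\in T(x,\eps)\setminus\{x\}$ and the choice $T(x,\eps)\cap L_{\delta}(x)=\{x\}$. Hence $\F$ is expansive. The main obstacle, and the one step I expect to require care, is guaranteeing that the holonomy maps $h_{\sigma_{i}}$ stay well defined with $y$ in their domains through all of the infinitely many indices $i\in\integer$, since a priori the domains could shrink to nothing; the uniform radius $\eta_{2}$ is exactly what prevents this, and it is available even for non-abelian $G$ precisely because each arc $t\mapsto\gamma(t)\cdot z$ is a translate of the single fixed path $\gamma$ and therefore has uniformly bounded length. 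A secondary, routine but fiddly, point is the comparison between the exponential-chart transversals $T(\cdot,\cdot)$ and the product charts of the foliation boxes underlying the uniform local-product estimates.
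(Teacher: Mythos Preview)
Your argument is correct and shares the paper's core strategy: feed two nearby $\delta$-pseudo-orbits for $\F$ into the plaque-expansiveness of $f=\varPhi_g$ and obtain $x_0\in L_\delta(y_0)$, contradicting $y\in T(x,\eps)\setminus\{x\}$ together with $L_\delta(x)\cap L_\delta(y)=\emptyset$. The execution differs in one point worth noting. The paper takes $y_n=f^n(y)=g^{\,n}\cdot y$, so that \emph{both} sequences are genuine $f$-orbits and the pseudo-orbit condition is trivial; from $\sup_n d_M(f^n(x),f^n(y))>\delta$ it then asserts that the cyclic subgroup generated by $f$ acts expansively through holonomy, leaving implicit the short plaque-slide between $f^n(y)$ and the actual holonomy image on $T(f^n(x))$. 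You instead set $y_i=h_{\sigma_i}(y)$ to be the holonomy images themselves, which obliges you to verify the pseudo-orbit condition $f(y_i)\in L_\delta(y_{i+1})$ via the slide estimate, but in return the link to the definition of foliation expansiveness is immediate (the would-be separating maps are literally the $h_{\sigma_i}$, and the negated hypothesis gives $d_M(x_i,y_i)\le\eps$ directly). Your version is longer but makes explicit precisely the step the paper glosses over; the paper's version is shorter because it avoids building the $\sigma_i$ altogether.
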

	
	\begin{proof}
		Suppose $f(x)=g_0\cdot x$ is plaque-expansive, and take $0<\delta\leq \eta$ a plaque expansivity constant of $\F$ that satisfies
		\[
           y\in T(x,\delta)\setminus \{x\}\Rightarrow L_{\delta}(x)\cap L_{\delta}(y)=\emptyset.
		\]
          By considering the sequences $\{x_n=g_0^n\cdot x=f^n(x)\}, \{y_n=g_0^n\cdot y=f^{n}y\}$, it follows by plaque-expansivenes that if $y\in T(x,\delta)\setminus \{x\}$ then necessarily $\sup_n d_M(x_n,y_n)>\delta$. This shows on each leaf  $L\in \F$ the subgroup generated by $f$ induces an expansive sub-group of $\mathcal{H}(L)$, which shows that $\F$ is expansive.
	\end{proof}
     
     It follows from the above that $\F^c_g$ is expansive, which in turn implies that $\F^{cs}_g,\F^{cu}_g$ are also expansive. This is consequence of the fact that if an expansive foliation $\F$ sub-foliates a foliation $\mathcal{G}$ (meaning, each leaf of $\mathcal{G}$ is union of leaves of $\F$), then $\mathcal{G}$ is expansive, because in this case there is a natural inclusion $\mathcal{H}(\F) \subset \mathcal{H}(\mathcal G)$.

     The foliations $\F^s_g, \F^u_g$ seem harder to deal with in general, but we can deal with the algebraic case. We consider the following situation, which encapsulates the case for typical algebraic Anosov actions and their correspoding stable and unstable (horocyclic) foliations, see \cite{FirstCoh}.

     Consider $G$ a continuous semi-simple Lie group with finite center, non-trivial compact factor, and let $\Gamma<G$ be a uniform lattice. Write $G=KAN$ its Iwasawa decomposition and let $X=\Gamma/ G$. Then $N$ acts on $X$ by right multiplication, and this action is usually referred as a ``horocyclic flow''.

    \begin{proposition}
    The orbit foliation of $N$ is not expansive. 
    \end{proposition}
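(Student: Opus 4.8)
The plan is to produce, from the nontrivial compact factor, a small family of $\F$-preserving isometries of $X$ that is transverse to the orbit foliation $\F$ of $N$ and interacts nicely with the holonomy, and to use it to defeat every candidate expansivity constant. Write $\mathfrak g=\mathfrak g_{nc}\oplus\mathfrak g_{c}$ as the sum of the non-compact and the (nonzero) compact ideals of the semisimple Lie algebra $\mathfrak g$, and let $G_{c}\trianglelefteq G$ be the compact connected normal subgroup integrating $\mathfrak g_{c}$. In the Iwasawa decomposition one has $\mathfrak a\subset\mathfrak p\subset\mathfrak g_{nc}$ and hence $\mathfrak n\subset\mathfrak g_{nc}$; consequently $N\subset G_{nc}$, the subgroup $G_{c}$ centralizes $N$, and $G_{c}\cap N=\{e\}$. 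Since $\Gamma$ is a uniform lattice in a semisimple group it contains no nontrivial unipotent element, so $\Gamma\cap gNg^{-1}=\{e\}$ for all $g$, the right $N$-action on $X$ is free, and $\F=\{\Gamma g N\}_{g}$ is genuinely the orbit foliation (with simply connected leaves).

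First I would fix the metric conveniently. Pick an inner product on $\mathfrak g$ that is $\mathrm{Ad}(G_{c})$-invariant (for instance $-B$ on $\mathfrak g_{c}$ and anything on $\mathfrak g_{nc}$, which is automatically $\mathrm{Ad}(G_{c})$-invariant because $[\mathfrak g_{c},\mathfrak g_{nc}]=0$) and for which $\mathfrak g_{c}\perp\mathfrak g_{nc}$; extend it to a left-invariant metric on $G$ and push it down to $X$. For $h\in G_{c}$ the right translation $R_{h}$ acts in the left trivialization by $\mathrm{Ad}(h^{-1})$, which is orthogonal, so $R_{h}$ is an isometry of $G$; it commutes with left translations, in particular with the $\Gamma$-action, and thus descends to an isometry $\rho_{h}$ of $X$, with $h\mapsto\rho_{h}$ continuous and $\rho_{e}=\mathrm{id}$. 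Three facts will be used: (i) by compactness of $X$, $\theta(h):=\sup_{z\in X}d_{X}(z,\rho_{h}z)\to 0$ as $h\to e$; (ii) $\rho_{h}$ preserves $\F$, taking $\Gamma gN$ to $\Gamma(gh)N$ (using that $h$ centralizes $N$); (iii) at every point the $\rho$-orbit direction (namely $\mathfrak g_{c}$ in the left trivialization) is orthogonal to $\F$, whose leaf through $\Gamma g$ has tangent space $\mathfrak n$ there, and $\mathfrak n\perp\mathfrak g_{c}$. Finally, discreteness of $\Gamma$ together with compactness of $X$ provides a neighbourhood $V\ni e$ in $G_{c}$ such that for $h\in V\setminus\{e\}$ the isometry $\rho_{h}$ has no fixed points and moves no point to its own local plaque (here one uses $G_{c}\cap N=\{e\}$).

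Then I would conclude. Given any proposed expansivity constant $\eps>0$, choose $h\in V\setminus\{e\}$ with $\theta(h)$ much smaller than $\eps$, fix any $x\in X$, and set $y:=\rho_{h}(x)$; then $y\neq x$, $d_{X}(x,y)\le\theta(h)$, and $y$ sits on a transversal to $\F$ through $x$, so after sliding $y$ along a tiny $\F$-path (of length $\le\theta(h)$) we may assume $y\in T(x,\eps)$. Let $H=h_{\gamma}$ be any holonomy map of $\F$ with $x$ in its domain and $z=h_{\gamma}(x)$. Because $\rho_{h}$ is an $\F$-preserving isometry, the leaf-path copied from $\gamma$ into the leaf through $y$ is $\rho_{h}\circ\gamma$, whose endpoint is $\rho_{h}(z)$; reading this on transversals gives $H(y)=\rho_{h}(z)$ up to a slide of length $\le\theta(h)$, so $d_{X}(H(x),H(y))\le 2\,\theta(h)+(\text{lower order})<\eps$. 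Thus no holonomy of $\F$ separates $x$ and $y$ by more than $\eps$, and since $\eps$ was arbitrary, $\F$ is not expansive.

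The one place the argument needs care --- and the main obstacle --- is the passage between the algebraic description $y=\rho_{h}(x)=\Gamma gh$ and the Riemannian transversals $T(x,\eta)$ from the definition of expansivity: one must verify that moving in the $G_{c}$-direction lands on (or $\mathrm{O}(\theta(h)^{2})$-close to, hence plaque-equivalent to a point of) the geodesic transversal, and that the holonomy pseudo-group is genuinely conjugated by $\rho_{h}$ as asserted. This is routine once the foliation boxes are taken fine enough relative to $\eps$; the substance of the proof is the structural observation that a nonzero compact factor forces the existence of a small, $\F$-preserving, transverse family of isometries commuting appropriately with the holonomy.
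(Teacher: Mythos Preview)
Your argument is correct and takes a genuinely different route from the paper's. The paper gives a two-line entropy argument: expansive foliations have positive geometric entropy (citing \cite{AR} and Theorem~3.4.3 of \cite{Walczak2004}), while unipotent actions such as the $N$-action on $\Gamma\backslash G$ have zero entropy; contradiction. You instead exploit the stated hypothesis of a nontrivial compact factor directly, producing from right translation by $G_c$ a family of $\F$-preserving isometries $\rho_h$ close to the identity, and showing that the pair $(x,\rho_h(x))$ defeats any candidate expansivity constant. The technical point you flag does go through: since $\sup_t d_X(\gamma(t),\rho_h(\gamma(t)))\le\theta(h)$ and $\rho_h\circ\gamma\subset L_y$, once $\theta(h)$ is below the Lebesgue number of a foliation atlas the curve $\rho_h\circ\gamma$ visits the same chain of plaques as $\gamma$, hence \emph{is} the holonomy lift, giving $d_X(z,h_\gamma(y))=O(\theta(h))$ uniformly in $\gamma$. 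What each approach buys: yours is elementary and self-contained (no entropy machinery), but depends essentially on the compact factor; the paper's argument is shorter and strictly more general---it never uses the compact-factor hypothesis, so it applies equally to the standard setting where $G$ has no compact factors, which your argument does not cover.
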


    \begin{proof}
       Indeed, expansive foliations have positive geometric entropy (\cite{AR}), which in turn implies for foliated actions that their entropy (as actions) is also positive. See for example the first part of Theorem 3.4.3 in \cite{Walczak2004}.

       It is well known, on the other hand, that the action of any unipotent action (as the one defined by $N$ on $X$) has zero entropy (cf. Theorem 7.6 in \cite{Einsiedler2008DiagonalAO}). 
    \end{proof}


     
    



\subsection{Codimension-one Expansive foliations with compact leaves}

In what follows we explain why the result of our Theorem A is sharp. Let us recall that given $B,F$ closed manifolds and a representation $\Gamma:\Pi_1(B)\to \mathrm{Diff}^r(F)$ one can construct a foliated fiber bundle $\pi: E_{\Gamma}\to B$ with fiber $B$, whose foliation $\F$ is transverse to the fibers; this is called the suspension of $\Gamma$ (with the corresponding data). In addition, the holonomy group of $\mathcal{F}$ is conjugated to $\Gamma$. See \cite{FoliationsI}.

It follows in particular that if $\Gamma$ is discrete and the action of $\mathrm{Im}(\Gamma)$ is expansive on $F$, then $\F$ is an expansive foliation (of codimension $\dim F$).

\begin{example}[Codimension-one expansive foliation with a single compact leaf]\label{excpctleaf}

In Example 8.11 of \cite{Hurder2000} S. Hurder gives an example of an expansive action of a subgroup $\Gamma=\langle f,g\rangle<\mathrm{Diff}^{\infty}_{+}(S^1)$ having a unique fixed point, with minimal action on the complement: here $\mathrm{Diff}^{\infty}_{+}(\mathbb{S}^1)$ denotes the orientation preserving (smooth) diffeomorphisms of the circle. 

Let $\Sigma$ be the bitorus: then $\pi_1(\Sigma)\sim \langle a_1,b_1,a_2,b_2|[a_1,b_1][a_2,b_2]=1\rangle$. It follows that $\langle a_1,a_2\rangle<\pi_1(\Sigma)$ is free. Define the representation $\Gamma:\pi_1(\Sigma)\to \mathrm{Diff}^{\infty}_{+}(\mathbb{S}^1)$ by
\begin{align*}
&\Gamma(a_1)=f, \Gamma(a_2)=g\\
&\Gamma(b_1)=\Gamma(b_2)=Id.
\end{align*}
Then the suspension of $\Gamma$ provides an example of an expansive codimension-one foliation with finite but non-zero compact leaves: indeed, the leaf corresponding to the common fix point $p$ of $f,g$ is compact. This is easily noted since the leaf that contains $p$ is a minimal set of the foliation, and thus compact.

Similarly, S. Hurder constructs an example of an expansive action $\Gamma=\langle f,g\rangle<\mathrm{Diff}^{\infty}_{+}(S^1)$ having a unique minimal set, which is a Cantor set. Suspending this as above we get a codimension-one expansive foliation with exceptional minimal set (and no compact leaves).
    
\end{example}

\section*{Aknowledgments}

The first author would like to thank Jana Rodríguez-Hertz and Raúl Ures for
their kind invitation to SUSTech University, where this research was conducted. He extends his gratitute to the Department of Mathematics of SUSTech, for their hospitality during this time.

\bibliographystyle{alpha}
\bibliography{expbiblio}

\end{document}